\documentclass[11pt]{article}

\usepackage{amsmath, amssymb, amsthm}   % For math symbols and environments
\usepackage{mathrsfs}                  % For script fonts, if needed
\usepackage{graphicx}                  % For figures, if used later
\usepackage{enumitem}                  % For customizable lists
\usepackage{hyperref}                  % For hyperlinks
\usepackage{xcolor,tikz}                    % For colored text (optional)
\usepackage[ruled,vlined,linesnumbered]{algorithm2e}
\usepackage{graphicx}

\usepackage{lmodern}                  % Better font rendering
\usepackage[T1]{fontenc}
\usepackage[utf8]{inputenc}

\newtheorem{theorem}{Theorem}[section]

\newtheorem{lemma}[theorem]{Lemma}

\title{Universal Cycles on Affine Lines}
\author{Ming-Hsuan Kang, Shin-Hsun Chou}
\date{April 2026}

\def \F{\mathbb{F}} 
\def \AG{\mathrm{AG}}

\def \GL{\mathrm{GL}}

\begin{document}

\maketitle

\begin{abstract}
A universal cycle is a cyclic sequence in which each object of a combinatorial family appears exactly once as a contiguous window. 
While such cycles are well understood for many discrete structures and linear subspaces, the case of affine lines 
presents additional difficulties arising from parallelism.

We prove that universal cycles exist for affine lines in $\mathrm{AG}(n,q)$ for all $n \ge 2$ and all prime powers $q$. 
Our construction embeds the problem into $\mathrm{PG}(n,q)$, using points at infinity to encode directions, and proceeds via a decomposition into pairwise and triple configurations combined with a recursive lifting and gluing argument.

We further interpret the construction in the Grassmannian $G_q(2,n+1)$, where affine lines correspond to the outer shell of $2$-subspaces, thereby extending known constructions for Grassmannians. 
A Python implementation is provided as supplementary material.
\end{abstract}

\section{Introduction}

Universal cycles, introduced by Chung, Diaconis, and Graham~\cite{ChungDiaconisGraham},
provide compact cyclic encodings of large combinatorial families.
A universal cycle for a family $\mathcal{F}$ of discrete objects is a cyclic
sequence in which every object of $\mathcal{F}$ appears exactly once as a
contiguous subsegment (or “window”).  
The classic example is the de~Bruijn cycle, which efficiently encodes all
$q$-ary words of length $n$.
Since their introduction, universal cycles have been constructed for many
combinatorial families, including subsets, permutations, partitions,
and designs; see, for example,
\cite{HurlbertJohnsonZahl2009,KnuthVol4A,Johnson}.
These constructions are closely related to Gray codes and combinatorial
generation problems, where one seeks cyclic listings with prescribed local transition rules; see \cite{SavageSurvey} for a general overview.

A major development in the geometric setting was initiated by
Jackson~\cite{Jackson2009}, who proved the first general existence theorem for
universal cycles on $2$-dimensional subspaces of the Grassmannian $G_q(2,m)$.
His recursive construction proceeds by partitioning $G_q(2,m)$ into “special’’
and “regular’’ subspaces, distinguished by whether they contain a fixed
coordinate vector.
In contrast to recursive approaches, algebraic constructions such as
\cite{ChiKangHsieh2025} provides explicit cycles using the multiplicative
structure of finite fields, in a manner analogous to classical de~Bruijn
sequences.

\medskip

While most existing results focus on linear subspaces or combinatorial objects with global symmetry, the case of affine lines introduces additional challenges arising from parallelism and the absence of a canonical origin.
In particular, affine lines naturally decompose into parallel classes,
and this structure is not directly compatible with the standard universal-cycle
frameworks.

In this paper, we investigate universal cycles for the family of
\emph{affine lines} in finite affine space.
Let $\mathrm{AG}(n,q)\cong\mathbb{F}_q^{\,n}$ denote the $n$-dimensional affine
space over $\mathbb{F}_q$.
An affine line $\ell\subset\mathrm{AG}(n,q)$ is uniquely determined by any
ordered pair of distinct points $(u,v)$ lying on it.
Thus a cyclic vertex sequence
\[
   \mathcal{C}=(p_i)_{i\in\mathbb{Z}/N\mathbb{Z}}
\]
induces a cyclic sequence of affine lines via windows $(p_i,p_{i+1})$.
The universal-cycle problem asks whether such a sequence can represent
every affine line of $\mathrm{AG}(n,q)$ exactly once.

\subsection{The Affine Obstruction and the Projective Extension}

A fundamental obstruction arises if one attempts to construct such a cycle
using only affine points.
Consider $\mathrm{AG}(n,2)$.
Since each line contains exactly two points, a universal cycle for lines would
be a cyclic ordering of $\mathbb{F}_2^{\,n}$ in which every unordered pair of
distinct points appear exactly once as an adjacent pair.
Equivalently, this corresponds to an Eulerian circuit in the complete graph $K_{2^n}$.
Because all vertices of $K_{2^n}$ have odd degree, no Eulerian circuit exists.
Thus, purely affine universal cycles are impossible for $q=2$.

To overcome this obstruction, we extend the vertex set to the projective
completion $\mathrm{PG}(n,q)$ by adjoining the hyperplane at infinity
$H_\infty$.
Every affine line $\ell$ has a unique point at infinity $[\ell]\in H_\infty$
representing its direction.
Allowing these points at infinity to appear in the cycle provides the
flexibility needed to capture the parallelism structure of affine lines.

\paragraph{Example.}
Let $\mathbb{F}_2^{\,2}=\operatorname{span}\{u_1,u_2\}$ and define
\[
   \ell_1=\mathbb{F}_2 u_1,\qquad
   \ell_2=\mathbb{F}_2 u_2,\qquad
   \ell_3=\mathbb{F}_2(u_1+u_2),
\]
the three directions of $\mathrm{AG}(2,2)$.
Writing $[\ell_i]$ for their points at infinity, the cyclic sequence
\[
   0 \;\to\; [\ell_1] \;\to\; u_1+u_2 \;\to\; [\ell_3]
   \;\to\; u_1 \;\to\; [\ell_2] \;\to\; 0
\]
represents every affine line of $\mathrm{AG}(2,2)$ exactly once.

\subsection{Main Result}

Our main theorem establishes the existence of universal cycles for affine
lines in all finite dimensions.

\begin{theorem}[Main Theorem]
Let $n\ge 2$ and $q$ be any prime power.
There exists a universal cycle for the affine lines of $\mathrm{AG}(n,q)$,
that is, a cyclic sequence
\[
   \mathcal{C}=(p_i)_{i\in\mathbb{Z}/N\mathbb{Z}} \subset \mathrm{PG}(n,q)
\]
such that:
\begin{enumerate}
   \item each window $(p_i,p_{i+1})$ represents a well-defined affine line of
         $\mathrm{AG}(n,q)$;
   \item every affine line of $\mathrm{AG}(n,q)$ appears exactly once as such
         a window.
\end{enumerate}
The vertex set may include both affine points and points at infinity, as
required by the construction.
\end{theorem}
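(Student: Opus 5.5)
The plan is to reduce the statement to an Eulerian-circuit problem on $\mathrm{PG}(n,q)$. First I fix which windows are admissible. A window $(u,v)$ of two distinct affine points represents the line $uv$. A window $(u,d)$ or $(d,u)$, with $u$ affine and $d\in H_\infty$, represents the affine line through $u$ with direction $d$. Windows of two points at infinity are forbidden. Under this convention an affine line $\ell$ is represented by exactly the unordered pairs of distinct points of its projective closure $\bar\ell$, which has $q+1$ points, only one of which lies at infinity. The task then becomes: choose for every affine line $\ell$ one representative pair $e_\ell\subset\bar\ell$ such that the multigraph $G=(\mathrm{PG}(n,q),\{e_\ell\})$ has all degrees even and all non-isolated vertices in one component. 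An Eulerian circuit of $G$ then gives the cycle $\mathcal C$. Distinct lines have distinct closures, so each line is read exactly once. Points at infinity never appear consecutively, because every edge has at least one affine endpoint. The example for $\mathrm{AG}(2,2)$ is exactly such a circuit.

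For connectivity I will start from a base choice in which every affine point $u$ receives an edge $e_\ell=\{u,[\ell]\}$ for at least one line $\ell$ through $u$. I will also include enough affine–affine edges, for instance from one full parallel class, to link points with different directions. This makes $G$ connected on all affine points, and every infinite point used is attached to an affine point. The remaining freedom goes into parity correction.

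The key local move is re-choosing $e_\ell$. Replacing $\{a,b\}$ by $\{a,c\}$ with $c\in\bar\ell\setminus\{a,b\}$ toggles the parities of exactly $b$ and $c$. This is possible whenever $q+1\ge 3$, which always holds. I will fix parities by a leaf-stripping procedure. Take a spanning tree $T$ of the affine points in which every edge is realised by a distinct line. Reserve the lines of $T$, and choose all other edges arbitrarily subject to the connectivity requirement above. Then process the leaves of $T$ inward. For a leaf $v$ attached to its parent $p$ through a line $\ell$, set $e_\ell=\{v,p\}$ if $v$ is currently odd. Otherwise set $e_\ell=\{p,w\}$ with $w\in\bar\ell\setminus\{v,p\}$. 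After this step $v$ is even, and $v$ is never touched again. The root comes out even automatically by the handshake lemma.

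The main obstacle is the auxiliary vertex $w$, whose parity gets disturbed after it may already have been settled. I would first try to take $w=[\ell]$, the point at infinity. All infinite points are then postponed to a final correction phase, in which each odd infinite point $d$ is paired with another odd infinite point $d'$. I would repair each such pair by a small gadget: two parallel-class lines through a common affine point $u$, with directions $d$ and $d'$, have their edges switched between $\{u,d\}$ and affine–affine pairs. Since $u$ lies on both lines, these two switches toggle $u$ twice and therefore leave it even. The sum of all degrees is even, so the odd infinite points come in pairs. For $q=2$ the lines have only three points and there is no slack, so this gadget must be checked by hand. If it fails there, I would fall back on the paper's suggested recursion: decompose into configurations on pairs and triples of directions, solve those explicitly, and lift and glue the solutions from $\mathrm{AG}(n-1,q)$ to $\mathrm{AG}(n,q)$.
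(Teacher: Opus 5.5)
Your reduction is sound and is a genuinely different route from the paper's. Choosing one chord $e_\ell\subset\bar\ell$ per affine line and asking for an Eulerian circuit is exactly equivalent to the theorem. But the proposal never secures the condition that carries the difficulty: the final even graph must have a single nontrivial component.

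\begin{itemize}
\item The base choice is not specified in a way that gives connectivity. One chord on each line of a parallel class gives disjoint chords; for $q=2$ it is a perfect matching. Edges $\{u,[\ell]\}$ connect things only if the directions are coordinated.
\item Every later step rewrites edges. In your leaf-stripping an even leaf $v$ receives $e_\ell=\{p,[\ell]\}$, so the tree $T$ need not attach $v$ to anything. The gadget also deletes edges at $d$ and $d'$.
\item An even but disconnected graph only yields several disjoint cycles. Merging them means re-choosing edges, which reopens the parity problem.
\end{itemize}

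What is missing is a backbone: a set of lines whose chords are fixed once and for all and already connect all affine points. Every non-isolated point at infinity is then attached automatically. These lines must be disjoint from $T$ and from all gadget lines. You would have to show that such a backbone, a line-disjoint $T$ (whose existence you also only assert) and enough spare lines coexist. In $\mathrm{AG}(2,2)$, where the six lines are the edges of $K_4$, a backbone tree and a line-disjoint $T$ already use every line, leaving none for gadgets.

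The parity endgame also has errors.
\begin{itemize}
\item After stripping, $\deg r$ is congruent mod $2$ to the number of odd points at infinity, since the total degree on affine points and on $H_\infty$ have the same parity. So ``the root comes out even by the handshake lemma'' and ``the odd infinite points come in pairs'' are circular. One of them must be enforced by an extra move.
\item The gadget as stated does not do what you claim. Replacing $\{u,d\}$ by $\{u,x\}$ toggles $d$ and $x$. Replacing it by $\{x,y\}$ toggles $u,d,x,y$. Either way you spoil already-settled affine points.
\item The move that toggles exactly $u$ and $d$ is $\{x,d\}\to\{x,u\}$ with $x\in L$ affine. This requires the gadget lines to currently carry such chords and to be distinct from the tree lines and from each other.
\end{itemize}
With that move, $q=2$ causes no parity trouble at all, so your worry there is misplaced. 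The real obstruction is connectivity, and deferring to ``the paper's recursion'' is not an argument.

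For comparison, the paper avoids both issues at once. Each building block is already a closed cycle through $0$: either two fibers over a common transversal hyperplane $W$, or three coplanar fibers built in a plane and copied to all cosets. Parity therefore holds piece by piece, and connectivity is automatic when all pieces are glued at $0$.
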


\medskip
Our construction also admits a Grassmannian interpretation. 
Via a standard homogenization map, affine lines in $\mathrm{AG}(n,q)$ correspond precisely to the outer shell of the Grassmannian $G_q(2,n+1)$. 
In this sense, our universal cycles extend the recursive framework originating in Jackson’s work for full Grassmannians.

Our main contribution is to establish the existence of universal cycles for affine lines in all dimensions. 
This is achieved through a projective extension framework, together with a structural decomposition into pairwise and triple constructions, unified via a recursive lifting mechanism.

To facilitate verification and reproducibility, we provide a Python implementation of the universal cycle construction as supplementary material on arXiv. 
Portions of the exposition and implementation were developed with the assistance of the AI tool ChatGPT.

\section{Preliminaries}

\subsection{Segments, Cycles, and Universal Cycles}

A finite sequence of points
\[
   (p_1,\dots,p_{m+1})\subset\mathrm{PG}(n,q)
\]
is called a \emph{double-window segment} if:
\begin{enumerate}
   \item each window $(p_i,p_{i+1})$ determines an affine line, and
   \item these windows represent pairwise distinct affine lines.
\end{enumerate}
We require $p_1\ne p_{m+1}$ so that the segment has two distinct endpoints.
If instead $p_{m+1}=p_1$, the sequence is a \emph{double-window cycle}.

Throughout, all uses of “segment’’ and “cycle’’ refer to double-window objects.

\medskip
\paragraph{Transversality.}
Two segments or cycles are \emph{transversal} if the sets of affine lines they
represent are disjoint.  
Equivalently, all windows appearing in the two objects represent distinct lines.

\medskip
\paragraph{Universal cycle.}
Given a set $\mathcal{F}$ of affine lines, a double-window cycle is
\emph{universal for $\mathcal{F}$} if its windows represent exactly the lines
in $\mathcal{F}$.

\subsection{Gluing Lemma}

We record the two basic operations for assembling cycles from smaller pieces.
Transversality is always assumed.

\medskip
\noindent\textbf{Gluing cycles.}
If several transversal cycles share a common vertex, they may be spliced at
that vertex to form a single larger cycle.

\medskip
\noindent\textbf{Gluing segments.}
Let $\mathcal{S}$ be a family of transversal segments, each with two distinct
endpoints.  
For a vertex $v$, let $\mu(v)$ be the number of segments in $\mathcal{S}$ having
$v$ as an endpoint.  
If $\mu(v)$ is even for every vertex, then by reversing some segments if
needed, all segments can be concatenated into a single double-window cycle.

\medskip
These facts follow immediately from the standard parity condition for joining
finite paths, and we omit the proof.

\subsection{Parametrization of Direction Fibers}

For a $1$-dimensional subspace $\ell\subset\mathbb{F}_q^n$, its point at
infinity is denoted $[\ell]$.  
The \emph{direction fiber} of $\ell$ is
\[
   \mathcal{L}(\ell)=\{\,x+\ell : x\in\mathbb{F}_q^n\,\},
\]
the set of all affine lines parallel to $\ell$.

Fix a decomposition
\[
   \mathbb{F}_q^n=\ell\oplus W.
\]
Each line $L\in\mathcal{L}(\ell)$ meets $W$ in a unique point, giving a natural
parametrization
\[
   \Phi_\ell(w)=w+\ell \qquad (w\in W).
\]
Thus each $L\in\mathcal{L}(\ell)$ is encoded by the 2-term segment
$(w,[\ell])$.

\subsection{Strategy of Construction}

With the parametrization of fibers in place, our construction proceeds in four logical steps:

\begin{enumerate}
   \item \textbf{Pairwise Construction:} We first show that any two direction fibers admit a universal cycle. This serves as the primary building block for the majority of directions.
   \item \textbf{Recursive Lifting:} We establish a dimension-reduction tool (the Recursive Lifting Lemma), proving that a universal cycle on a subspace can be naturally extended to the full affine space.
   \item \textbf{Triple Construction:} Utilizing the lifting lemma, we show that any three coplanar fibers admit a universal cycle. This construction is crucial for handling the parity constraint when the total number of directions is odd.
   \item \textbf{Assembly:} We partition the set of all directions based on the parity of $[n]_q$. Using the Gluing Lemma, we assemble the pairwise and triple cycles into a single universal cycle covering all affine lines in $\mathrm{AG}(n,q)$.
\end{enumerate}

\section{Universal Cycles for Unions of Direction Fibers}

With the parametrization of direction fibers in place, we now analyze how
universal cycles may be constructed on unions of multiple fibers.  
We begin with the fundamental two–fiber case, then treat the planar three–fiber
configuration in an even characteristic, and finally establish the five–fiber
construction for the odd case in higher dimensions.  
These results form the essential ingredients for assembling universal cycles on
all affine lines of $\mathrm{AG}(n,q)$.

\subsection{Two Fibers}

We first show that whenever two directions admit a common transversal
hyperplane, their fibers together form a universal cycle.

\begin{theorem}\label{jointtwofiber}
Let $\ell_1,\ell_2 \subset \mathbb{F}_q^{\,n}$ be distinct $1$-dimensional
subspaces.  
Suppose a codimension-one subspace $W \subset \mathbb{F}_q^{\,n}$ satisfies
\[
   \mathbb{F}_q^{\,n} = \ell_1 \oplus W = \ell_2 \oplus W.
\]
Then the union
\[
   \mathcal{L}(\ell_1)\,\sqcup\,\mathcal{L}(\ell_2)
\]
admits a universal cycle whose affine vertices are precisely the points of $W$.
In particular, the cycle contains the vertex $0$.
\end{theorem}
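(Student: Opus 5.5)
The plan is to exploit the common parametrization by $W$. By the discussion of direction fibers, each line of $\mathcal{L}(\ell_i)$ meets $W$ in exactly one point, so it is represented by the window $(w,[\ell_i])$ or $([\ell_i],w)$ for a unique $w\in W$. Hence $\mathcal{L}(\ell_1)\sqcup\mathcal{L}(\ell_2)$ is in bijection with the edge set of a bipartite multigraph $G$. Its vertices are the $q^{n-1}$ points of $W$ together with the two points at infinity $[\ell_1],[\ell_2]$. Each $w\in W$ is joined once to $[\ell_1]$ and once to $[\ell_2]$. Since the two directions are distinct, all $2q^{n-1}$ windows represent pairwise distinct lines. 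A universal cycle with affine vertices exactly $W$ is then the same thing as an Eulerian circuit of $G$.

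The key steps, in order, are as follows. First, check that every window $(w,[\ell_i])$ determines a well-defined affine line, namely $w+\ell_i$, and that distinct pairs $(w,i)$ give distinct lines. For the same $i$ this follows because $\ell_i\oplus W$ is direct; for different $i$ the directions differ. Second, compute degrees: each $w\in W$ has degree $2$, and each $[\ell_i]$ has degree $q^{n-1}$. Third, handle the parity of $q^{n-1}$.

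If $q$ is even, or more simply in all cases, one can write the cycle explicitly. Enumerate $W=\{w_0=0,w_1,\dots,w_{N-1}\}$ with $N=q^{n-1}$ and take
\[
0\to[\ell_1]\to w_1\to[\ell_2]\to w_2\to[\ell_1]\to w_3\to\cdots,
\]
alternating the two points at infinity between consecutive affine points. When $N$ is even, this closes after visiting each $w_j$ once, but each $w$ then meets only one direction, so it is not yet universal. The correct explicit form is instead the following. The graph $G$ is connected, since every $w$ is adjacent to both $[\ell_1]$ and $[\ell_2]$. Its degrees are $2$ at affine points and $N$ at the two infinite points. By the Gluing Lemma, we treat each path $[\ell_1]\to w\to[\ell_2]$ as a segment with endpoints $[\ell_1]$ and $[\ell_2]$. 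This gives $N$ transversal segments, so $\mu([\ell_1])=\mu([\ell_2])=N$.

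The main obstacle is therefore the case of $N$ odd, i.e.\ $q$ odd. There $\mu$ is odd and the Gluing Lemma alone fails, and $G$ genuinely has odd-degree vertices. It would then seem the statement must allow windows such as $([\ell_1],[\ell_2])$. These do not determine an affine line, however, so I need to recheck the degree count. On reflection, each $w$ could appear twice in the cycle, so degree parity at $[\ell_i]$ is the obstruction only if the representation via $W$ is forced. Since the statement says the affine vertices are precisely $W$, the representation is forced, and for odd $q$ I expect to need an alternative: represent some lines by windows $(w,w')$ of two affine points of $W$ lying on a common line. This is impossible, because $W$ is transversal to $\ell_i$ and so contains no such pair.

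I would therefore first test $q=3$, $n=2$ by hand. I expect the resolution is that the segment count is $N$ segments between the same two endpoints, and concatenating them alternately forwards and backwards closes up exactly when $N$ is even. For odd $N$ one instead splits off a single $w$, using it as the base vertex $0$, so that the cycle starts and ends there. In that reading, $0$ contributes the segments $[\ell_1]\to 0$ and $0\to[\ell_2]$ as the two ends of an open trail through $0$. The remaining $N-1$ (even) segments are glued between $[\ell_1]$ and $[\ell_2]$ so as to connect them, which balances parity. Verifying this parity bookkeeping is the crux.
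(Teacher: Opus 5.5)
Your even-$q$ case is fine via the Gluing Lemma on the $N$ segments $[\ell_1]\to w\to[\ell_2]$. Your dismissal of the alternating cycle is mistaken, though. In $0\to[\ell_1]\to w_1\to[\ell_2]\to w_2\to[\ell_1]\to\cdots$ each $w_j$ sits between two \emph{different} points at infinity, so for even $N=q^{n-1}$ this cycle is already universal; it is exactly the paper's construction.

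The genuine gap is odd $q$. There your own counting argument settles the question negatively. Suppose all affine vertices lie in $W$. No window can join two affine points, since their difference is a nonzero vector of $W$, which meets $\ell_1$ and $\ell_2$ only in $0$. A window $([\ell_1],[\ell_2])$ is not a line, and a window through any other point at infinity gives a line outside the family. So the windows containing $[\ell_1]$ are in bijection with the $q^{n-1}$ lines of $\mathcal{L}(\ell_1)$. But each occurrence of $[\ell_1]$ in a cyclic sequence accounts for two such windows, so their number is even.

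Hence no cycle of the required kind exists for odd $q$, and your proposed rescue cannot work. Singling out the vertex $0$ changes no degree. Attaching the path $[\ell_1]\to 0\to[\ell_2]$ to a cycle built from the other $N-1$ segments gives an open trail from $[\ell_1]$ to $[\ell_2]$, whose closing window would have to be $([\ell_2],[\ell_1])$. There is no parity bookkeeping left to verify.

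The missing idea is to use one affine vertex \emph{outside} $W$. Then one line in each fiber is represented by an affine--affine window, which corrects the parity at both $[\ell_1]$ and $[\ell_2]$. The paper does this as follows:
\begin{enumerate}
\item Let $w^*$ span $W\cap(\ell_1\oplus\ell_2)$ and write $w^*=au_1+bu_2$ with $a,b\neq 0$, where $\ell_i=\mathbb{F}_q u_i$.
\item Run the even-size construction on $W\setminus\{w^*\}$, which has even size and contains $0$.
\item Replace the step $0\to[\ell_1]$ by $0\to au_1\to w^*\to[\ell_1]$.
\end{enumerate}
The three new windows represent $\Phi_{\ell_1}(0)=\ell_1$ (the line just removed), $au_1+\ell_2=\Phi_{\ell_2}(w^*)$, and $\Phi_{\ell_1}(w^*)$. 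These are exactly the lines that were missing.

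Since $au_1\notin W$, what the paper actually establishes for odd $q$ is a universal cycle through $0$ with affine vertex set $W\cup\{au_1\}$. The clause ``precisely the points of $W$'' in the statement holds only for even $q$, which matches the obstruction you found.
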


\begin{proof}
Each fiber is uniformly parametrized by
\[
   \Phi_{\ell_i}(w)=w+\ell_i, \qquad w\in W,\ i=1,2.
\]
Thus, any universal cycle on the two fibers becomes a lifted walk alternating
between vertices of $W$ and the points $[\ell_1],[\ell_2]$.

\subsubsection*{Case $q$ even}
Since $|W|=q^{\,n-1}$ is even, write $W=\{w_1,\dots,w_{2N}\}$ and consider the
cycle
\[
   w_1 \to [\ell_1] \to w_2 \to [\ell_2] \to w_3 \to [\ell_1] 
        \to \cdots \to w_{2N} \to [\ell_2].
\]
Each $w_k$ appears exactly once with each direction, so the cycle covers all
lines of both fibers.

\subsubsection*{Case $q$ odd}
Now $|W|=2N+1$ is odd.  
Because $\ell_1\oplus\ell_2$ meets $W$ in a unique nonzero direction, write
\[
   W\cap(\ell_1\oplus\ell_2)=\mathbb{F}_q w^{*},\qquad w^{*}\neq 0,
\]
and enumerate $W\setminus\{w^{*}\}=\{0=w_1,w_2,\dots,w_{2N}\}$.
Applying the even-$|W|$ construction gives a cycle covering all lines except
\[
   \Phi_{\ell_1}(w^{*}),\qquad
   \Phi_{\ell_2}(w^{*}).
\]

Write $w^{*}=a u_1 + b u_2$ with $a,b\neq 0$.  
Replace the initial segment $0\to[\ell_1]$ by
\[
   0 \to a u_1 \to w^{*} \to [\ell_1].
\]
The windows introduced are exactly the missing lines, and no other changes
occur. Hence, the completed cycle covers all lines in both fibers and contains $0$.
\end{proof}

\subsection{The Recursive Lifting Lemma}

To simplify the construction for arbitrary dimensions, we introduce a method to extend universal cycles from a subspace to the entire affine space.
Before presenting the formal proof, we provide the geometric intuition behind this reduction.

\subsubsection*{Geometric Intuition: The Stack of Layers}
We may visualize the affine space $\mathbb{F}_q^{\,n}$ as a ``stack of layers,'' where each layer corresponds to a coset of the subspace $U$.
Since the set of directions $\mathcal{D}$ is contained entirely within $U$, any affine line with a direction in $\mathcal{D}$ is confined to a single layer; it never traverses between layers.
Consequently, the problem initially decouples into independent sub-problems on each layer.

The key to the construction is that while the affine points are distinct across layers, the \textit{points at infinity} (the directions) are identical for all layers in the projective completion.
These shared vertices at infinity act as ``pivots'' or ``hinges.''
Our strategy is to construct a cycle on the base layer, replicate it identically on all other layers, and then utilize these shared pivots to stitch the disjoint layer-cycles into a single connected structure covering the entire space.

\begin{lemma}\label{recursivelifting}
Let $U$ be a linear subspace of $\mathbb{F}_q^{\,n}$ with $\dim(U) < n$.
Let $\mathcal{D}$ be a set of directions such that every direction in $\mathcal{D}$ lies in $U$ (i.e., its direction vectors lie in $U$).
Suppose the union of fibers is restricted to the affine subspace $U$
\[
    \bigsqcup_{\ell \in \mathcal{D}} \mathcal{L}_U(\ell)
\]
admits a universal cycle $\mathcal{C}_U$, where $\mathcal{L}_U(\ell)$ denotes the set of affine lines in $U$ with direction $\ell$.
Then the corresponding union of fibers in the full space $\mathbb{F}_q^{\,n}$ admits a universal cycle $\mathcal{C}$.
Moreover, if $\mathcal{C}_U$ contains the origin $0$, then $\mathcal{C}$ also contains $0$.
\end{lemma}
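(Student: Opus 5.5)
Fix a complement $V$ with $\mathbb{F}_q^{\,n}=U\oplus V$, so the layers are the cosets $v+U$ for $v\in V$. Since every direction in $\mathcal{D}$ lies in $U$, each affine line with direction in $\mathcal{D}$ lies in exactly one layer. The fibers in the full space therefore decompose as
\[
   \bigsqcup_{\ell\in\mathcal{D}}\mathcal{L}(\ell)=\bigsqcup_{v\in V}\ \bigsqcup_{\ell\in\mathcal{D}}\bigl(v+\mathcal{L}_U(\ell)\bigr).
\]
For each $v\in V$, translate $\mathcal{C}_U$ by $v$. Affine vertices $p$ go to $p+v$, and points at infinity stay fixed, because translation acts trivially on $H_\infty$. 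This gives a double-window cycle $\mathcal{C}_v$.

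Next I check that $\mathcal{C}_v$ is universal for the lines in layer $v+U$ with directions in $\mathcal{D}$. A window $(p,p')$ of two affine points goes to the translated line. A window $(p,[\ell])$ represents $p+\ell$, so its image represents $p+v+\ell$. Translation is a bijection on lines, so distinctness is preserved. Lines in different layers are disjoint as point sets, hence distinct. So the cycles $\mathcal{C}_v$, $v\in V$, are pairwise transversal, and together they cover exactly the target family.

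Then I glue them by the cycle-gluing rule. The key point is that $\mathcal{C}_U$ must contain at least one point at infinity. If $\mathcal{D}$ is nonempty and $\mathcal{C}_U$ used only affine points, I would need another way to connect the layers. So I first argue that $\mathcal{C}_U$ contains some $[\ell]$. One route is to modify $\mathcal{C}_U$ so that it does, but the cleaner claim is that it automatically does, and I expect this to be the main subtle point. If a universal cycle happened to be purely affine, connecting the layers would require new windows, which are not available. So I would either show that such cycles always pass through infinity, or add this as a harmless assumption: all constructions in the paper, e.g.\ Theorem~\ref{jointtwofiber}, visibly pass through $[\ell_1]$.

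Granting a shared vertex $[\ell]\in\mathcal{C}_U$, it lies on every $\mathcal{C}_v$. I splice all $|V|$ cycles at $[\ell]$: traverse $\mathcal{C}_{v_1}$ from $[\ell]$ back to $[\ell]$, then $\mathcal{C}_{v_2}$, and so on. By transversality and the coverage above, the result is a double-window cycle $\mathcal{C}$ universal for the full union of fibers.

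Finally, take $v=0$. Then $\mathcal{C}_0=\mathcal{C}_U$, so if $0\in\mathcal{C}_U$ then $0\in\mathcal{C}$.
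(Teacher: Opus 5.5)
Your argument is the paper's argument: translate $\mathcal{C}_U$ into every coset of $U$, note that lines with direction in $\mathcal{D}$ stay inside one coset so the translates are transversal, and splice them at a shared point at infinity. Your checks are more careful than the paper's, which simply asserts that the vertices of each $\mathcal{C}_j$ include the points $\{[\ell]:\ell\in\mathcal{D}\}$. Specifically, you verify that windows $(p,[\ell])$ go to $(p+v,[\ell])$, that translates in different layers are transversal, and that all copies can be spliced at a single $[\ell]$.

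The subtle point you flag is real, and the paper passes over it silently. However, your hoped-for resolution, that a universal cycle automatically visits infinity, is false. In $\mathrm{AG}(2,3)$ consider the cyclic sequence
\[ (0,0),(1,0),(1,1),(2,0),(0,1),(2,1),(2,2),(0,0),(1,2),(0,2),(1,0),(0,1). \]
Its windows represent $y=0$, $x=1$, $y=2x+2$, $y=x+1$, $y=1$, $x=2$, $y=x$, $y=2x$, $y=2$, $y=x+2$, $y=2x+1$, $x=0$. That is each of the $12$ lines exactly once, and the sequence uses no point at infinity.

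Now take $U=\mathbb{F}_3^{\,2}\times\{0\}\subset\mathbb{F}_3^{\,3}$ and let $\mathcal{D}$ be the four directions of $U$. This $\mathcal{C}_U$ satisfies every hypothesis of the lemma, and it even contains $0$. Yet its three translates are pairwise vertex-disjoint, so translate-and-splice produces three separate cycles, with no admissible windows to join them.

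So neither your proof nor the paper's establishes the lemma as literally stated. You must take your second option: add the explicit hypothesis that $\mathcal{C}_U$ visits at least one point at infinity. That hypothesis is harmless where the lemma is actually used, namely Theorem~\ref{jointhreefiber}, whose planar cycles pass through $[\ell'_1],[\ell'_2],[\ell'_3]$. (Theorem~\ref{jointtwofiber}, which you cite, does not invoke the lemma.) With that hypothesis added, your proof is complete.
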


\begin{proof}
Let $k = \dim(U)$. We decompose the affine space $\mathbb{F}_q^{\,n}$ into $q^{n-k}$ parallel affine subspaces (cosets), say $U_1, \dots, U_{q^{n-k}}$, where $U_1 = U$.
Since every direction $\ell \in \mathcal{D}$ lies in $U$, any affine line with direction $\ell$ is entirely contained within exactly one of these cosets.
Thus, the fiber of $\ell$ in the full space partitions as:
\[
    \mathcal{L}(\ell) = \bigsqcup_{j=1}^{q^{n-k}} \mathcal{L}_{U_j}(\ell).
\]
We construct the cycle $\mathcal{C}$ as follows:
\begin{enumerate}
    \item For each coset $U_j$, we construct a cycle $\mathcal{C}_j$ isomorphic to $\mathcal{C}_U$ via translation.
    \item The vertices of $\mathcal{C}_j$ consist of affine points in $U_j$ and the points at infinity $\{[\ell] : \ell \in \mathcal{D}\}$.
    \item While the affine vertices are disjoint for distinct $j$, the points at infinity are shared across all cycles $\mathcal{C}_j$.
\end{enumerate}
Since the collection of cycles $\{\mathcal{C}_j\}$ shares common vertices (the points at infinity), by the gluing lemma, their union forms a connected Eulerian graph.
Merging these cycles yields a single universal cycle $\mathcal{C}$ covering all lines in $\bigsqcup_{\ell \in \mathcal{D}} \mathcal{L}(\ell)$.
Since $U_1=U$ contains $0$, and $\mathcal{C}_1 = \mathcal{C}_U$ contains $0$, the final merged cycle contains $0$.
\end{proof}

\subsection{Three Fibers}

While Theorem~\ref{jointtwofiber} efficiently handles pairs of fibers, the case where the total number of directions is odd requires a construction for a triplet of directions.
Using the Recursive Lifting Lemma, we can reduce this problem to the plane.

\begin{theorem}\label{jointhreefiber}
Let $\ell_1, \ell_2, \ell_3$ be three distinct \textbf{coplanar} directions in $\mathbb{F}_q^{\,n}$.
Then the union of fibers
\[
   \bigsqcup_{i=1}^3 \mathcal{L}(\ell_i)
\]
admits a universal cycle containing the vertex $0$.
\end{theorem}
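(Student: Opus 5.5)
The plan is to reduce to the plane and then build the plane cycle as an Eulerian circuit in an auxiliary multigraph. If $n>2$, let $U=\ell_1\oplus\ell_2$, which contains $\ell_3$ by coplanarity. By the Recursive Lifting Lemma (Lemma~\ref{recursivelifting}) it suffices to build, in the plane $U\cong\F_q^2$, a universal cycle for $\mathcal{L}_U(\ell_1)\sqcup\mathcal{L}_U(\ell_2)\sqcup\mathcal{L}_U(\ell_3)$ that contains $0$. A linear change of coordinates fixes $0$ and maps any three distinct directions of a plane to $\ell_1=\F_q e_1$, $\ell_2=\F_q e_2$, $\ell_3=\F_q(e_1+e_2)$, so we may assume this normal form.

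In the plane, a double-window cycle is an Eulerian circuit of a connected multigraph $G$ on $\mathrm{PG}(2,q)$. Each of the $3q$ lines contributes exactly one edge joining two of its points, which may be affine points or its point at infinity. So the task is to choose one edge per line so that every degree is even, $G$ is connected, and $0$ is a vertex. The basic device is the \emph{pairing} already used in Theorem~\ref{jointtwofiber}. Two lines $L,L'$ of different directions meet in a unique affine point $p$, and we represent them by the two edges $p\to[L]$ and $p\to[L']$. This adds $2$ to the degree of $p$, and $1$ to each of the two infinity degrees. Several pairs may share the same point $p$, which is harmless.

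\textbf{Case $q$ even.} Split each fiber into two halves of size $q/2$. Pair half of fiber $1$ with half of fiber $2$, the other half of fiber $2$ with half of fiber $3$, and the remaining halves of fibers $3$ and $1$. Every line of the plane is then represented once. Each affine vertex has even degree, and each $[\ell_i]$ has degree $q$, which is even. The three points at infinity are pairwise joined through affine vertices, so $G$ is connected. To include $0$, choose one of the $1$–$2$ pairs to be $\{\F_q e_1,\F_q e_2\}$, so that it meets at $0$. The gluing lemma then gives the cycle.

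\textbf{Case $q$ odd.} Now $3q$ is odd, so some lines must be represented by purely affine edges. First realize the triangle $0\to e_1\to e_1+e_2\to 0$, whose sides have directions $\ell_1,\ell_2,\ell_3$. This closed affine cycle uses one line from each fiber and contains $0$. Each fiber then has $q-1$ remaining lines, an even number. Pair them as above, using $(q-1)/2$ pairs of each of the types $12$, $23$, $31$. Then each $[\ell_i]$ has degree $q-1$ (even), all affine degrees are even, and the infinity points are connected since $q\ge3$. The obstacle I expect is \emph{connectivity between the triangle and the paired part}. Each vertex of the triangle already lies on two triangle lines, and its third line through that vertex is the only remaining one, so no pair can meet at a triangle vertex. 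I would first fix this by replacing one pair with a short detour, in the style of the odd case of Theorem~\ref{jointtwofiber}. Concretely, I would take the remaining $\ell_2$-line $x=0$ through $0$ and represent it by the affine edge $0\to e_2$. I would then pair the $\ell_1$-line $y=1$ and the $\ell_3$-line through $e_2$ at the point $e_2$, which keeps $e_2$ even apart from this new edge. Finally I would adjust one other pair of the paired part to restore even degrees at $0$ and $e_2$. If the bookkeeping fails, the fallback is to replace the triangle by a longer closed affine walk that passes through a point where a pair can be attached. Once $G$ is connected with all degrees even and contains $0$, the gluing lemma yields the planar cycle, and Lemma~\ref{recursivelifting} lifts it to $\F_q^{\,n}$ with $0$ retained.
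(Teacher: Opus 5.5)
Your reduction to the plane and your even-$q$ case are correct and match the paper: each block $\mathcal{C}_u$ there is exactly three of your pairings, at $(u,u+1)$, $(u+1,0)$ and $(0,u)$. The odd case, however, has a genuine gap, and it is structural rather than a matter of bookkeeping. In the plane every affine point lies on exactly three lines of $\mathcal{L}(\ell_1)\sqcup\mathcal{L}(\ell_2)\sqcup\mathcal{L}(\ell_3)$. Each line's single edge has at most one end at a given point, so in any admissible multigraph every affine vertex has degree $0$ or $2$. Hence every vertex of a purely affine closed walk has its whole degree used by that walk, and the walk is an entire connected component: no pair, and indeed no other edge, can ever be attached to it. So your fallback (a longer closed affine walk through a point where a pair attaches) is impossible, not merely fiddly. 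Your concrete repair fails for the same reason. Once $x=0$ is drawn as $0\to e_2$, all three family lines through $0$ have an edge ending at $0$, so $\deg 0=3$, and no adjustment of other pairs can change it. The same happens at $e_2$, whose three lines $x=0$, $y=1$, $y=x+1$ are then all used.

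The missing idea is that the affine edges forced by parity must be threaded between points at infinity rather than closed up among themselves. Write $a_i$ for the number of $\ell_i$-lines drawn as affine--affine edges; then $\deg[\ell_i]=q-a_i$, so each $a_i$ must be odd. These edges must lie on a closed walk that also visits the $[\ell_i]$. The paper's kernel cycle does exactly this. In its normalization, $(0,1)\to(0,2)\to[\ell'_3]\to(2,0)\to(0,0)\to(1,1)\to[\ell'_1]\to(2,2)\to[\ell'_2]\to(0,1)$ has one affine edge per direction, contains $0$, and passes through all three points at infinity. Replacing your triangle by it leaves $q-3$ lines (an even number) in each fiber. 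Your own counting argument, with $(q-3)/2$ pairs of each type, then gives $\deg[\ell_i]=2+(q-3)$ and completes the proof for every odd $q$.

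Incidentally, this combination is more robust than the paper's explicit blocks $\mathcal{C}_{u,v}$. Those blocks cover the diagonals $y-x\in -R$, which complement the kernel's $\{0,\pm 2\}$ only when $2=-1$, i.e.\ in characteristic $3$. For $q=5$, for example, the line $y=x+2$ is covered twice and $y=x+4$ not at all. Pairing whatever lines remain needs no such matching.
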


\begin{proof}
Let $V' = \mathrm{span}(\ell_1, \ell_2, \ell_3)$. Since the directions are coplanar, $\dim(V') = 2$.
By the Recursive Lifting Lemma (Lemma~\ref{recursivelifting}), it suffices to construct the universal cycle within the affine plane $V' \cong \mathbb{F}_q^{\,2}$.

\paragraph{Normalization of Coordinates.}
Since the group $PGL(2,q)$ acts 3-transitively on the set of directions in $\mathbb{F}_q^{\,2}$, there exists a linear automorphism $\phi: V' \to \mathbb{F}_q^{\,2}$ that maps the three given directions $\ell_1, \ell_2, \ell_3$ to the standard reference directions:
\[
    \ell'_1 = \langle (0,1) \rangle, \quad
    \ell'_2 = \langle (1,0) \rangle, \quad
    \ell'_3 = \langle (1,1) \rangle.
\]
Thus, without loss of generality, we construct the universal cycle $\mathcal{C}'$ for these standard directions. The universal cycle for the original directions is then obtained by applying the inverse transformation $\phi^{-1}$ to every affine vertex in $\mathcal{C}'$, while the direction vertices map back to $[\ell_1], [\ell_2], [\ell_3]$.
The construction of $\mathcal{C}'$ depends on the parity of $q$.

\subsubsection*{Case $q$ is even}
Since the characteristic is 2, we partition the field elements $\mathbb{F}_q$ into $q/2$ pairs of the form $\{u, u+1\}$.
For each pair, we construct a block cycle $\mathcal{C}_u$ covering the 6 lines associated with the coordinates $\{u, u+1\}$.
The cycle is explicitly defined by the sequence:
\[
    \mathcal{C}_u: \quad
    (u, u+1) \to [\ell'_1] \to (u+1, 0) \to [\ell'_3] \to (0, u) \to [\ell'_2] \to (u, u+1).
\]
These cycles are disjoint in affine lines and share the direction vertices. They can be glued into a single universal cycle containing $0$ (by choosing the pair containing $0$).

\subsubsection*{Case $q$ is odd}
We use the "Kernel and Pairs" strategy. Since $q$ is odd, the characteristic is not 2, which implies that the elements $0, 1, 2$ are distinct in $\mathbb{F}_q$.

\paragraph{1. The Kernel Cycle.}
We construct a specific cycle $\mathcal{C}_{\mathrm{ker}}$ covering the 9 lines associated with the indices in $K=\{0, 1, 2\}$.
We use the following explicit sequence:
\[
    \mathcal{C}_{\mathrm{ker}}: \quad
    (0,1) \to (0,2) \to [\ell'_3] \to (2,0) \to (0,0) \to (1,1) \to [\ell'_1] \to (2,2) \to [\ell'_2] \to (0,1).
\]
This sequence covers the required kernel lines and contains the origin $(0,0)$.

\paragraph{2. Generic Block Cycles.}
The set of remaining elements $R = \mathbb{F}_q \setminus K$ has size $q-3$, which is even. We partition $R$ into disjoint pairs. For each pair $\{u, v\} \subset R$, we construct a block cycle:
\[
    \mathcal{C}_{u,v}: \quad
    (u, u) \to [\ell'_1] \to (v, 0) \to [\ell'_3] \to (u+v, v) \to [\ell'_2] \to (u, u).
\]
All block cycles and the kernel cycle share the common direction vertices $[\ell'_1], [\ell'_2], [\ell'_3]$ and can be glued at the origin.
\end{proof}

\subsection{Proof of the Main Theorem}

Let $[n]_q = 1+q+\cdots+q^{\,n-1}$ denote the total number of directions (points at infinity) in $\mathrm{AG}(n,q)$.
To prove the Main Theorem, we construct a universal cycle covering the union of fibers over all directions $\mathcal{D}$ in $\mathrm{AG}(n,q)$.
The strategy relies on partitioning $\mathcal{D}$ based on the parity of $[n]_q$.

\begin{proof}[Proof]
We distinguish two cases regarding the size of the set of directions $\mathcal{D}$.

\medskip
\noindent
\textbf{Case 1: The number of directions $[n]_q$ is even.}
(This occurs, for instance, when $q$ is odd and $n$ is even).
Since $|\mathcal{D}|$ is even, we can partition the set of all directions into $k = [n]_q / 2$ disjoint pairs:
\[
    \mathcal{D} = \{P_1, P_2, \dots, P_k\}, \quad \text{where } P_j = \{\ell_{a,j}, \ell_{b,j}\}.
\]
For each pair $P_j$, we apply Theorem~\ref{jointtwofiber}. This theorem guarantees the existence of a universal cycle $\mathcal{C}_j$ covering the fibers $\mathcal{L}(\ell_{a,j}) \cup \mathcal{L}(\ell_{b,j})$ such that $\mathcal{C}_j$ contains the origin $\mathbf{0}$.
The collection of cycles $\{\mathcal{C}_1, \dots, \mathcal{C}_k\}$ represents disjoint sets of affine lines (since fibers are disjoint). However, they all share the common affine vertex $\mathbf{0}$.
By the Gluing Lemma, the union of these cycles forms a connected Eulerian graph, which can be traversed as a single universal cycle.

\medskip
\noindent
\textbf{Case 2: The number of directions $[n]_q$ is odd.}
Since the total set of directions $\mathcal{D}$ has odd cardinality, we cannot partition it entirely into pairs. We handle the parity by isolating a triplet.

We select a triplet $T = \{\ell_1, \ell_2, \ell_3\} \subset \mathcal{D}$ such that $\ell_1, \ell_2, \ell_3$ lie in a common 2-dimensional subspace.
(Such a triplet always exists because any 2-dimensional subspace of $\mathbb{F}_q^{\,n}$ contains exactly $q+1$ directions, and since $q \ge 2$, we have $q+1 \ge 3$).

We then partition the remaining directions into disjoint pairs:
\[
    \mathcal{D} = T \cup \{P_1, \dots, P_m\},
\]
where $m = ([n]_q - 3)/2$.

\begin{enumerate}
    \item \textbf{The Triplet:} Apply Theorem~\ref{jointhreefiber} to the \textbf{coplanar} triplet $T$. This yields a universal cycle $\mathcal{C}_{\mathrm{trip}}$ covering $\bigcup_{\ell \in T} \mathcal{L}(\ell)$ that contains the origin $0$.
    \item \textbf{The Pairs:} Apply Theorem~\ref{jointtwofiber} to each of the remaining pairs $P_j$. This yields cycles $\mathcal{C}_j$ containing $0$.
\end{enumerate}

The cycle $\mathcal{C}_{\mathrm{trip}}$ and all pairwise cycles $\mathcal{C}_j$ share the common vertex $0$.
Since the edge sets (affine lines) covered by these cycles are disjoint and their union covers all directions in $\mathcal{D}$, applying the Gluing Lemma at $0$ produces a single universal cycle for all affine lines in $\mathrm{AG}(n,q)$.
\medskip
In both cases, for any dimension $n \ge 2$ and any prime power $q$, a universal cycle exists.
\end{proof}

\section{Grassmannian Interpretation and Connections}

The affine-line constructions developed above admit a natural reformulation in
the Grassmannian of $2$-subspaces.  This viewpoint shows that our main theorem
provides, recursively, a universal cycle on $G_q(2,n)$ for all $n$.

\subsection{Affine lines as Grassmannian \(2\)-subspaces}

Let \(G_q(k,m)\) be the Grassmannian of \(k\)-subspaces of \(\F_q^{\,m}\).  
For the coordinate hyperplane
\[
   H_{m-1} := \{ x_m = 0 \} \subset \F_q^{\,m},
\]
define the \emph{outer shell}
\[
   G_q(2,m)^{\circ}
   := \{\, U \in G_q(2,m) : U \not\subset H_{m-1} \,\}.
\]
Thus
\[
   G_q(2,m) \;=\; G_q(2,m-1)\;\sqcup\; G_q(2,m)^{\circ}.
\]

Identify \(\AG(m-1,q)\) with the affine chart
\[
   \{ (x,1) : x \in \F_q^{\,m-1} \} \subset \F_q^{\,m}.
\]
If \(L = w+\ell\) is an affine line in this chart, its homogeneous lifts
\((w,1)\) and \((v,0)\) (for any nonzero \(v\in\ell\)) span a unique
outer-shell subspace:
\[
   \tau(L) := \operatorname{span}\{ (w,1),(v,0) \} \in G_q(2,m)^{\circ}.
\]
Thus
\[
   \tau : \{\text{affine lines in }\AG(m-1,q)\}
          \xrightarrow{\;\cong\;} G_q(2,m)^{\circ}
\]
is a bijection.  
Consequently, any universal cycle on affine lines induces a universal cycle on
the outer shell \(G_q(2,m)^{\circ}\), with windows determined by the lifted
vertex vectors.

\subsection{From outer shells to full Grassmannian cycles}

We now explain how the affine-line construction yields a \emph{nested} family
of universal cycles on the Grassmannians \(G_q(2,n)\).

\medskip
\noindent
\textbf{Base case: \(G_q(2,3)\).}
Via the identification \(\F_{q^3}\cong\F_q^{3}\), a generator
\(\alpha\in\F_{q^3}^\times\) produces the classical Singer cycle
\[
   1 \to \alpha \to \alpha^2 \to \cdots \to \alpha^{q^2+q}\to 1,
\]
whose windows enumerate all elements of \(G_q(2,3)\).  
After choosing coordinates so that \(1\mapsto e_1=(1,0,0)\), we obtain a
universal cycle \(U_3\) on \(G_q(2,3)\) containing \(e_1\).

\medskip
\noindent
\textbf{Equivariance.}  
Because \(\GL_m(\F_q)\) acts transitively on nonzero vectors and preserves
Grassmannians, any universal cycle containing a nonzero vector \(v\) may be
post-composed with a linear map sending \(v\) to \(e_1\).  
Thus we may assume throughout that every cycle we construct contains the
distinguished vertex \(e_1\).

\medskip
\noindent
\textbf{Inductive extension.}
Suppose we have a universal cycle \(U_m\) on \(G_q(2,m)\) containing \(e_1\).
Applying the main affine-line theorem to \(\AG(m,q)\), and transporting the
construction via the bijection \(\tau\), we obtain a universal cycle
\(U_{m+1}^{\circ}\) on the outer shell \(G_q(2,m+1)^{\circ}\), which may again
be normalized to contain \(e_1\).

The crucial feature of our affine-line construction is that the outer-shell
cycle \(U_{m+1}^{\circ}\) is \emph{completely disjoint} from \(U_m\) and is
attached only at the shared vertex \(e_1\).  
Thus, by the gluing lemma, splicing at \(e_1\) produces a universal cycle
\[
   U_{m+1} \quad\text{on}\quad G_q(2,m+1)
\]
such that the original cycle \(U_m\) remains embedded \emph{verbatim} as an
unchanged subcycle of \(U_{m+1}\).

\medskip
Iterating this procedure yields the following structural theorem.

\begin{theorem}\label{thm:nested-Grassmann-cycles}
There exists a family of universal cycles
\[
   U_n \quad\text{on}\quad G_q(2,n), \qquad n\ge 3,
\]
such that:
\begin{enumerate}
   \item Each \(U_n\) contains the distinguished vertex \(e_1\).
   \item For every \(3 \le m \le n\), the entire cycle \(U_m\) appears
         unchanged as a contiguous subcycle of \(U_n\).
\end{enumerate}
\end{theorem}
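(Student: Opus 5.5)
We argue by induction on $n\ge 3$, building $U_{n}$ from $U_{n-1}$ by splicing in an outer-shell cycle. Throughout, a universal cycle on $G_q(2,m)$ is a cyclic sequence of nonzero vectors of $\F_q^{\,m}$ (or of projective points, since windows only depend on spans) in which each window spans a $2$-subspace and every element of $G_q(2,m)$ occurs exactly once.

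\textbf{Base case $n=3$.} Identify $\F_{q^3}\cong\F_q^{3}$ and take the Singer cycle $1\to\alpha\to\cdots\to\alpha^{q^2+q}\to 1$ of projective points. Consecutive points $\alpha^i,\alpha^{i+1}$ are independent, and the window $\langle\alpha^i,\alpha^{i+1}\rangle=\alpha^i\langle 1,\alpha\rangle$. The cyclic group of order $q^2+q+1$ acts regularly on $G_q(2,3)$, since it acts regularly on points and by duality on lines. Hence these windows are exactly the $q^2+q+1$ lines of $\mathrm{PG}(2,q)$, each occurring once. Applying an element of $\GL_3(\F_q)$ sending $1$ to $e_1$ gives $U_3$ containing $e_1$.

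\textbf{Inductive step.} Suppose $U_m$ on $G_q(2,m)$ contains $e_1$. Regard $G_q(2,m)$ as the $2$-subspaces inside $H_m=\{x_{m+1}=0\}\subset\F_q^{\,m+1}$. Apply the Main Theorem to $\AG(m,q)$ to get a cycle $\mathcal C$ of points of $\mathrm{PG}(m,q)$. Lift each vertex to $\F_q^{\,m+1}$: an affine point $w$ becomes $(w,1)$, and a point at infinity $[\ell]$ becomes $(v,0)$ with $v\in\ell$. A window $(p_i,p_{i+1})$ represents the affine line $L$, and the span of the two lifts is $\tau(L)$. To check this, consider the two kinds of window. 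An affine pair $(w,1),(w',1)$ spans the subspace containing $(w'-w,0)$, which is $\tau(L)$. A mixed pair $(w,1),(v,0)$ is $\tau(L)$ by definition. Because $\tau$ is a bijection onto $G_q(2,m+1)^\circ$, the lifted cycle $U^\circ_{m+1}$ is universal for the outer shell.

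\textbf{Normalization and splicing.} The subspaces covered by $U^\circ_{m+1}$ are not contained in $H_m$, while those covered by $U_m$ are, so the two cycles are transversal. To splice them we need a common vertex without altering $U_m$. We therefore move $U^\circ_{m+1}$, not $U_m$, by a linear map $g\in\GL_{m+1}(\F_q)$ that preserves $H_m$, so that $g$ maps the outer shell to itself and transversality is kept. We choose $g$ to send some point-at-infinity vertex $(v,0)$ of $U^\circ_{m+1}$ to $e_1$. Such a vertex exists: for $q$ even by Theorem~\ref{jointtwofiber}, and in general because every cycle from the Main Theorem uses the vertices $[\ell]$. Moreover $\GL(H_m)$ is transitive on the nonzero vectors of $H_m$. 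The Gluing Lemma (gluing cycles) applied at $e_1$ then gives $U_{m+1}$ on $G_q(2,m)\sqcup G_q(2,m+1)^\circ=G_q(2,m+1)$. In this cycle $U_m$ appears verbatim as a contiguous block: cut $U_m$ at $e_1$ and insert the traversal of $g(U^\circ_{m+1})$ at that occurrence of $e_1$.

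\textbf{Conclusion.} Iterating the step, and noting that contiguity is preserved under later splices inserted at $e_1$ (each later insertion goes at a single occurrence of $e_1$, which we can take at a block boundary), gives both properties for all $m\le n$.

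The main obstacle is the normalization. Moving $U^\circ_{m+1}$ by an arbitrary map sending some vertex to $e_1$, as the sketch in the text suggests, could destroy the outer-shell property. We must therefore insist on a map preserving $H_m$, and this in turn requires the shared vertex to be a point at infinity, i.e.\ a vector in $H_m$.
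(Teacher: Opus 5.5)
Your proposal is correct and follows the paper's route: the Singer cycle for $G_q(2,3)$, transport of the affine-line cycle of $\AG(m,q)$ to the outer shell via $\tau$, normalization to contain $e_1$, and splicing at $e_1$. You require the normalizing map to preserve $H_m$, which forces $e_1$ to be the image of a point-at-infinity vertex; this correctly fills a step the paper leaves loose, since its equivariance remark allows an arbitrary linear map, which could move the outer shell onto subspaces already covered by $U_m$. One thing to sharpen: when building $U_n$ from $U_{n-1}$, splice at an occurrence of $e_1$ not interior to the arc occupied by $U_{n-2}$ (e.g.\ always the same base occurrence), because splicing at an arbitrary block boundary can split an earlier $U_m$.
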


\newpage

\section*{Appendix: Algorithmic Description}
In this appendix, we present an algorithmic formulation of the construction of universal cycles for affine lines in $\mathrm{AG}(n,q)$, emphasizing the triple and pairwise decompositions together with the lifting process across dimensions.

\SetKwComment{tcc}{\(\triangleright\) }{}
\SetKwComment{tcp}{\(\triangleright\) }{}

\begin{algorithm}
\caption{Construction of Universal Cycle for Affine Lines in $\mathrm{AG}(n,q)$}
\label{alg:main}
\SetKwInOut{Input}{Input}
\SetKwInOut{Output}{Output}

\Input{Dimension $n \ge 2$, Prime power $q$}
\Output{A sequence $S$ representing the universal cycle}

\BlankLine
Initialize empty sequence $S$ containing origin $\mathbf{0}$\;
Let $\mathcal{D}$ be the set of all directions in $\mathrm{AG}(n,q)$\;
$N \leftarrow |\mathcal{D}|$\;

\BlankLine
\tcc{Case: Odd number of directions (Triple Construction)}
\If{$N$ is odd}{
    Select a triplet of directions $T = \{\ell_1, \ell_2, \ell_3\} \subset \mathcal{D}$\;
    $\mathcal{D} \leftarrow \mathcal{D} \setminus T$\;
    \tcp{Construct base cycle in 2D}
    $C_{base} \leftarrow \textsc{ConstructTripleBase}(q, \ell_1, \ell_2, \ell_3)$\;
    \tcp{Lift to dimension $n$}
    $C_{trip} \leftarrow C_{base}$\;
    \For{$k \leftarrow 3$ \KwTo $n$}{
        $C_{trip} \leftarrow \textsc{LiftCycle}(C_{trip}, k, q)$\;
    }
    Merge $C_{trip}$ into $S$ at vertex $\mathbf{0}$\;
}

\BlankLine
\tcc{Case: Remaining directions (Pairwise Construction)}
Partition $\mathcal{D}$ into pairs $\{(\ell_{2i}, \ell_{2i+1})\}$\;
\For{each pair $(\ell_a, \ell_b)$ in partition}{
    $C_{pair} \leftarrow \textsc{ConstructPairwise}(\ell_a, \ell_b, n, q)$\;
    Merge $C_{pair}$ into $S$ at vertex $\mathbf{0}$\;
}

\Return $S$\;
\end{algorithm}

\bibliographystyle{amsalpha}  % or plain, alpha, etc.
\bibliography{reference}     % <-- your references.bib
\end{document}